\documentclass[a4paper,12pt]{amsart}

\newtheorem{theo}{Theorem}[section]

\newtheorem{lemma}[theo]{Lemma}

\newtheorem{cor}[theo]{Corollary}

\newtheorem{ques}[theo]{Questions}

\newtheorem{rema}[theo]{Remark}
\newtheorem{remas}[theo]{Remarks}

\def \dem {\paragraph{ \em Proof.}}

\def \Romannumeral #1 {\expandafter\uppercase\expandafter {\romannumeral #1} }

\def \P {{\bf P}}

\def \calo {{\mathcal O}}

\def \Hom {{\rm {Hom}}}

\def \Z {{\bf Z}}
\def \Q {{\bf Q}}
\def \F {{\bf F}}

\def\smallsquare{\vbox{\hrule\hbox{\vrule height 1 ex\kern 1 ex\vrule}\hrule}}
\def\enddem{\hfill \smallsquare\vskip 3mm}

\def \lim {{\rm{lim\,}}}

\usepackage{amscd}
\usepackage{amssymb}
\usepackage{amsmath}
\usepackage{amsthm}
\usepackage{pb-diagram}

\everymath{\displaystyle}

\def \Gal {{\rm Gal}}

\DeclareFontFamily{U}{wncy}{}
\DeclareFontShape{U}{wncy}{m}{n}{%
   <5>wncyr5%
   <6>wncyr6%
   <7>wncyr7%
   <8>wncyr8%
   <9>wncyr9%
   <10>wncyr10%
   <11>wncyr10%
   <12>wncyr6%
   <14>wncyr7%
   <17>wncyr8%
   <20>wncyr10%
   <25>wncyr10}{}
\DeclareMathAlphabet{\cyrille}{U}{wncy}{m}{n}
\def\Sha{\cyrille X}

\usepackage{palatino}
\usepackage[all]{xy}

\usepackage{graphicx}
\usepackage[dvipsnames]{xcolor}

\makeatletter
\DeclareRobustCommand\widecheck[1]{{\mathpalette\@widecheck{#1}}}
\def\@widecheck#1#2{%
    \setbox\z@\hbox{\m@th$#1#2$}%
    \setbox\tw@\hbox{\m@th$#1%
       \widehat{%
          \vrule\@width\z@\@height\ht\z@
          \vrule\@height\z@\@width\wd\z@}$}%
    \dp\tw@-\ht\z@
    \@tempdima\ht\z@ \advance\@tempdima2\ht\tw@ \divide\@tempdima\thr@@
    \setbox\tw@\hbox{%
       \raise\@tempdima\hbox{\scalebox{1}[-1]{\lower\@tempdima\box
\tw@}}}%
    {\ooalign{\box\tw@ \cr \box\z@}}}
\makeatother

\title{On the defect in the generalized Grunwald--Wang problem}
\author{David Harari and Tam\'as Szamuely}
\address{Laboratoire de Math\'ematiques d'Orsay,
Universit\'e Paris-Saclay, 91405 Orsay, France}
\email{David.Harari@universite-paris-saclay.fr}
\address{Dipartimento di Matematica, Universit\`a di Pisa, Largo Bruno Pontecorvo 5, 56127 Pisa, Italy}
\email{tamas.szamuely@unipi.it}

\begin{document}

\maketitle

\noindent{\small{\sc Abstract.}  The classical Grunwald--Wang theorem asserts that, unless we are in the so-called special case, local cyclic Galois extensions at finitely many completions of a number field can be approximated by a global cyclic extension. In the special case the obstruction is measured by a group of order 2. It has been known for a long time that the Grunwald--Wang theorem extends to a very general context of valued fields. Therefore it is natural to ask whether in the special case the obstruction is always measured by a finite group and if so, is the order of this group bounded independently of the number of places under consideration. We show that the answer to both questions is negative in general, already for rational function fields and discrete valuations coming from points of the affine line. This has some interesting links to the arithmetic of function fields over $\Q$ or $\Q_p$.}

\section{Introduction}

Let $K$ be a field, $T$ a set of valuations of $K$, $M$ a finite \'etale commutative group scheme over $K$. Write $K_v$ for the completion of $K$ at $v$ and consider the product of restriction maps
$$
(r_v)_{v\in T}:\,H^1(K, M)\to\prod_{v\in T} H^1(K_v,M)
$$
for $v\in T$. In this setting equip each cohomology group with the discrete topology and
define $\overline{H^1(K,M)}$ as the closure of the image of $H^1(K,M)$ in the product. When $T$ is finite, this is just the image of $H^1(K,M)$, and when $T$ is infinite, we have
$$
\overline{H^1(K,M)}=\{ (\alpha_v)\in \prod_{v\in T} H^1(K_v,M)\mid {(\alpha_v)}_{v\in S}\in{\rm Im}{\left(r_v\right)}_{v\in S}\,\,\forall S\subset T, |S|<\infty \}
$$
by definition of the product topology.

\noindent We say that the {\em Grunwald-Wang theorem holds for $K$, $T$ and $M$} if $$\overline{H^1(K,M)}=\prod_{v\in T} H^1(K_v,M).$$

The classical case is of course when $K$ is a number field, $M=\Z/n\Z$ and $T$ is finite. As is well known, under these conditions the Grunwald--Wang theorem holds unless we are in the so-called {\em special case} (see e.g. \cite{nsw}, Theorem 9.2.8). The special case arises when $\Gal(K(\mu_{2^r})|K)$ is not cyclic, where $2^r$ is the largest power of 2 dividing $n$. The smallest counterexample (that of Wang \cite{wang}; see also \cite{artintate}, Chapter 10, Consequence of Theorem 1, p. 97) is for $K=\Q$, $M=\Z/8\Z$ and $T$ consisting only of the 2-adic valuation on $\Q$.

Apparently already Hasse had the insight that the theorem should hold more generally (for $M=\Z/n\Z$, $T$ finite and in the non-special case) in the above general context of valued fields. His approach was worked out in detail by Lorenz and Roquette in \cite{lr}. However, it was Saltman's method via generic Galois extensions \cite{saltman} and its reinterpretation by Colliot-Th\'el\`ene and Sansuc in \cite{ctsja} that opened the way for giving quick proofs of positive results on the Grunwald--Wang problem in a general setting. In fact, in (\cite{ctsja}, Theorem 8.4) it is proven that the Grunwald--Wang theorem holds in the above general context for finite $T$ and a large class of $M$ incuding $M=\Z/n\Z$ in the non-special case (see also Corollary \ref{cts} below).

In this note we focus on another, less studied aspect of the problem: that of the size of the quotient $\left(\prod_{v\in T} H^1(K_v,M)\right)/\overline{H^1(K,M)}$. In the classical case when $K$ is a number field the Poitou--Tate duality theorem implies that this quotient is always finite. For $M=\Z/n\Z$ one can even say more: the above quotient has order at most 2 (for the case of finite $T$ see e.g. Theorem 9.2.3 (ii) in \cite{nsw}; the general case is proven by similar arguments and passing to the limit). In light of this it is therefore natural to ask:

\begin{ques}\label{ques} ${}$
\begin{enumerate}
\item For general $K$, $T$ and $M$ is the quotient $\left(\prod_{v\in T} H^1(K_v,M)\right)/\overline{H^1(K,M)}$ always finite?

\item When the quotient is finite, is its order bounded independently of $T$?
\end{enumerate}
\end{ques}

A negative answer for question (2) implies a negative answer for question (1) for infinite $T$ but not necessarily for finite $T$. Therefore it is convenient to split question (1) in two, for finite and for infinite $T$. It should also be noted that for $M=\Z/n\Z$ the abelian group $\left(\prod_{v\in T} H^1(K_v,\Z/n\Z)\right)/\overline{H^1(K,\Z/n\Z)}$ is always of exponent 2 (see Corollary \ref{corgw} below), so in that case we are asking about dimensions of $\F_2$-vector spaces.

Though the answers to the above questions can be positive in several interesting cases besides that of number fields, we shall see that in general the answer is no for both questions, even when $K$ is a rational function field over a field $k$ of characteristic 0 and $T$ is a set of discrete valuations coming from closed points of $\P^1_k$. Regarding the first question, we shall show:

\begin{theo}\label{counter1}
There exists a field $k$ of characteristic zero such that for every finite
set $T$ of discrete valuations of $K:=k(t)$ coming from rational points of ${\bf P}^1_k$
the diagonal map
$$ H^1(K,\Z/8\Z) \to \prod_{v \in T} H^1(K_v,\Z/8\Z)$$
has infinite cokernel, provided $T$ has at least 2 elements.
\end{theo}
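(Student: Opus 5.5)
The plan is to take for $k$ a field in the special case, for instance $k=\Q$ (so $\Gal(k(\mu_8)|k)$ is not cyclic), and to use residue maps along rational points of $\P^1_k$. This keeps track of the \emph{unramified parts} of characters at the points of $T$. Throughout I use the identification $H^1(K,\Z/8\Z)=\Hom(\Gal(\bar K|K),\Z/8\Z)$. Whether $k=\Q$ actually works depends on the step flagged as the main obstacle below, so I am not certain of this choice.

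\emph{Local structure.} For a rational point $a$ the completion is $K_a=k((t-a))$, and there is a split exact sequence
$$0\to H^1(k,\Z/8\Z)\to H^1(K_a,\Z/8\Z)\to H^0(k,\Z/8\Z(-1))\to 0,$$
with a splitting given by the choice of uniformizer $t-a$. In the special case $H^0(k,\Z/8\Z(-1))$ is small; for $k=\Q$ it is $\{0,4\}$. A local class therefore has the form $\chi_a=4\epsilon\,\kappa_{t-a}+\psi_a$, where $\kappa_{t-a}$ is the Kummer character of $\sqrt{t-a}$ viewed in $\Z/8\Z$ via $1\mapsto 4$, $\epsilon\in\{0,1\}$, and $\psi_a\in H^1(k,\Z/8\Z)$. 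Since $8\kappa_{t-a}=0$, we get $2\chi_a=2\psi_a$, and this is a \emph{constant}, i.e.\ unramified, class.

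\emph{Global constraint.} Fix two points $a\neq b$ in $T$. I claim that for a global $\chi\in H^1(K,\Z/8\Z)$ the class $2\chi$ is unramified at every rational point, and more generally at every closed point, because of the shape of the residue group at each point. Faddeev's sequence then shows that $2\chi$ lies in $H^1(k,\Z/8\Z)$. If this holds, the unramified parts satisfy $2\psi_a=2\psi_b$ for every global class. Locally, however, $\psi_a$ and $\psi_b$ can be chosen independently. The cokernel therefore surjects onto $2H^1(k,\Z/8\Z)$ via $(\chi_v)\mapsto 2\psi_a-2\psi_b$. This last group is infinite for $k=\Q$: it contains infinitely many independent cyclic quartic characters. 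For points of $T$ beyond $a$ and $b$ one simply takes trivial local classes, so $|T|\ge 2$ suffices. The case where one point is $\infty$ is handled with the uniformizer $1/t$.

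\emph{Main obstacle.} The hardest step is the claim that $2\chi$ is constant. At closed points $P$ of higher degree the residue lies in $H^0(k(P),\Z/8\Z(-1))$. This group can be all of $\Z/8\Z$, for example when $\mu_8\subset k(P)$, so $2\chi$ can genuinely be ramified there. The specializations of $2\chi$ at $a$ and $b$ are then not forced to agree; they differ by a contribution computed by a reciprocity law (corestrictions of residues). To close the gap I would first try to prove that $2\psi_a-2\psi_b$ always lies in a fixed subgroup of $2H^1(k,\Z/8\Z)$ of infinite index. For instance, I would show it is killed by restriction to $k(\mu_8)$, or that it lies in the image of corestrictions from the finitely many relevant extensions. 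If that fails for $k=\Q$, the fallback is to choose $k$ adapted to the argument. Concretely, one could take a field with $\mu_8\not\subset k'$ for all finite extensions $k'$ of bounded $2$-part, which makes every residue group equal to $\{0,4\}$, while $2H^1(k,\Z/8\Z)$ is still infinite. This would be the reason the theorem asserts only the \emph{existence} of a suitable $k$.
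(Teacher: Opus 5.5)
The step you flag as the main obstacle is not just hard but false. With $k=\Q$ the conclusion you are after actually fails: for $k$ finitely generated over $\Q$ (or finite over $\Q_p$) and $T$ finite, the cokernel is \emph{finite}. Indeed, by Lemma~\ref{keylem} it is a quotient of $\prod_{v\in T}H^1(K_v,F)$, where $F$ is the flasque torus in a resolution $1\to\Z/8\Z\to F\to P\to 1$, and $H^1(K_v,F)\cong H^1(k,F)$ is finite for such $k$.

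Here is what goes wrong concretely, for any $k$ and $a,b\in k$. Put $k'=k(\mu_8)$ and fix $\zeta_8\in k'$, so that $H^1(k'(t),\Z/8\Z)\cong k'(t)^*/k'(t)^{*8}$. Write $\delta(d)$ for Kummer classes, take $\chi'=\delta(t-c)$ with $c\in k'\setminus\{a,b\}$, and set $\chi={\rm Cor}\,\chi'\in H^1(K,\Z/8\Z)$. Since $a$ has a single preimage in $\P^1_{k'}$, with residue field $k'$, corestriction commutes with restriction to $K_a$ and with specialization. So $\chi$ is unramified at $a$ with $\psi_a={\rm Cor}_{k'|k}\,\delta(a-c)$, and similarly at $b$. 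As $c$ varies, $(a-c)/(b-c)$ runs through all of $k'^*\setminus\{1\}$. Hence the values of $2\psi_a-2\psi_b$ on global classes include all of $2\,{\rm Cor}_{k'|k}H^1(k',\Z/8\Z)$.

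Consequently your invariant only descends to $2H^1(k,\Z/8\Z)/N$, where $N$ is the subgroup generated by such values. This group is a quotient of the cokernel, so for $k=\Q$ it is finite: the `reciprocity correction' absorbs all but a finite part of $2H^1(\Q,\Z/8\Z)$. Your fallback field does not exist either. The degree $[k(\mu_8):k]$ divides $4$, and since $k$ is infinite, Lemma~\ref{closedpoint} gives infinitely many closed points with residue field $k(\mu_8)$, where the residue group is all of $\Z/8\Z$. In any case the construction above works over every $k$.

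The missing idea is to measure the obstruction not in $H^1(k,\Z/8\Z)$ but in $H^1(k,F)$. Three facts combine: Lemma~\ref{keylem}; the isomorphism $H^1(k,F)\cong H^1(K,F)$; and $H^1(k,F)\cong H^1(\calo_v,F)\cong H^1(K_v,F)$ at rational points (completeness plus flasqueness). Together they identify the cokernel with $\bigl(\prod_{v\in T}H^1(k,F)\bigr)/H^1(k,F)$, with $H^1(k,F)$ embedded diagonally, i.e. with $H^1(k,F)^{|T|-1}$. So the theorem amounts to exhibiting $k$ with $H^1(k,F)$ infinite, and that is where its real content lies. The paper starts from $k_0=\Q_2$, where $H^1(k_0,F_0)\neq 0$ by Wang's counterexample and Corollary~\ref{corkeylem}. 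It then invokes \cite{6auth}, Example~8.4, for an infinitely generated extension $k$ with $H^1(k,F)$ infinite; the base-changed resolution stays flasque. Your residue bookkeeping has no substitute for this input, and by the finiteness result above, no $k$ finitely generated over $\Q$ can work.
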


In the above theorem the field $k$ is of infinite transcendence degree over $\Q$. In fact, we shall see that for $k$ a finitely generated extension of $\Q$ and $T$ as in the theorem the quotient $\left(\prod_{v\in T} H^1(K_v,M)\right)/\overline{H^1(K,M)}$ is always finite for a finite \'etale commutative $k$-group scheme $M$. However, Question \ref{ques} (2) has a negative answer even for $K=\Q(t)$ or $\Q_2(t)$.

\begin{theo}\label{counter2} Let $k$ be $\Q$ or $\Q_2$, and let $T$ be a finite
set of discrete valuations of $K:=k(t)$ coming from closed points of ${\bf P}^1_k$.
The cokernel of the diagonal map
$$ H^1(K,\Z/8\Z) \to \prod_{v \in T} H^1(K_v,\Z/8\Z)$$
can have arbitrarily large dimension over $\F_2$ as $T$ varies.
\end{theo}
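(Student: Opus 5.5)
The plan is to detect the cokernel by a reciprocity law on ${\bf P}^1_k$, using the element $16\in k^*$. This element is the source of Wang's counterexample: it is an $8$-th power in a field $F\supset\Q$ as soon as $F$ contains one of $i$, $\sqrt 2$, $\sqrt{-2}$, but it is not an $8$-th power in $\Q$ or $\Q_2$. I expect the main obstacle to be proving that the constraints produced this way are nontrivial and independent; this is the step I would attack first.

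\emph{Local invariants and the reciprocity constraint.} Let $T=\{P_1,\dots,P_n\}$ consist of closed points of ${\bf P}^1_k$ with residue fields $k_i=k(P_i)$. Since we are in characteristic $0$, each completion $K_{P_i}$ is a complete discretely valued field with residue field $k_i$. For $\alpha\in H^1(K_{P_i},\Z/8\Z)$, consider the cup product $\alpha\cup(16)\in H^2(K_{P_i},\mu_8)$ and take its residue
$$\rho_i(\alpha):={\rm Cor}_{k_i|k}\,\partial_{P_i}\bigl(\alpha\cup(16)\bigr)\in H^1(k,\Z/8\Z).$$
Now let $\chi\in H^1(K,\Z/8\Z)$ be global. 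The reciprocity law for residues on ${\bf P}^1_k$ (Faddeev's exact sequence) gives
$$\sum_P {\rm Cor}_{k(P)|k}\,\partial_P(\chi\cup(16))=0 .$$
Because $16$ is a unit everywhere, $\partial_P(\chi\cup(16))$ equals, up to sign, $\partial_P(\chi)\cup(16)$. This is just the class of $16$ in $k(P)^*/k(P)^{*8}$, twisted by the residue of $\chi$. So the contribution of a point $P\notin T$ vanishes whenever $16$ is an $8$-th power in $k(P)$, or whenever $\chi$ is unramified at $P$. The difficulty is that $\chi$ may ramify at rational points outside $T$, where $16$ is not an $8$-th power. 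To control this I will project to the $2$-part and to the quotient $k^*/\pm k^{*2}\cdot\langle\text{stuff}\rangle$ in which $16$ survives. Concretely, I will show that $\partial_P(\chi)\cup(16)$ only depends on $\partial_P(\chi)$ modulo $2$ (because $16$ is a $4$-th power up to sign). I will then use the fact that the residues of $\chi$ sum to $0$ (again by Faddeev) to rewrite the contributions from outside $T$ in terms of those from inside $T$.

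\emph{Choice of $T$ and construction of many independent constraints.} I will take $P_1,\dots,P_n$ to be closed points whose residue fields $k_i$ are quadratic extensions $k(\sqrt{d_i})$. The $d_i$ will be chosen so that $16$ is not an $8$-th power in any $k_i$, and so that the classes ${\rm Cor}_{k_i|k}(\,\cdot\,\cup(16))$ land in independent directions of $H^1(k,\Z/8\Z)[2]$. For example, I will take $d_i$ to be distinct primes $\equiv 1 \bmod 8$ for $k=\Q$. For $k=\Q_2$ only finitely many $d_i$ are available, so there I will instead vary the residue fields among the finitely many quadratic extensions and also use points of higher degree. For each $i$ I will pick a local class $\alpha_i$, ramified at $P_i$ with $\partial_{P_i}\alpha_i$ of order $8$, such that $\rho_i(\alpha_i)$ is nonzero. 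I will then build the $\F_2$-linear map
$$\prod_i H^1(K_{P_i},\Z/8\Z)\to \bigoplus_{\text{suitable quotient}}\Z/2,$$
obtained by assembling the $\rho_i$ and reducing modulo the contributions that are forced to be global. By reciprocity, this map kills the image of $H^1(K,\Z/8\Z)$, while its image has dimension growing with $n$ (roughly $n-1$). This is precisely the phenomenon that for a number field caps the defect at $2$. Here, the residue fields $k_i$ differ from point to point, so the corestricted symbols live in different parts of $H^1(k,\Z/8\Z)$, and the constraints no longer collapse to a single one.

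For the main obstacle, I would first treat the case where all $\alpha_i$ are tamely ramified of order $8$ with residues in $\Z/8\Z$. In that case $\rho_i(\alpha_i)$ is explicitly ${\rm Cor}_{k_i|k}(16)$ viewed in $k^*/k^{*8}$, that is, $N_{k_i|k}(16)=256$ modulo norms. Independence then reduces to showing that the classes of $16$ in the various $k_i^*/k_i^{*8}$ are not killed simultaneously by corestriction. This is an elementary computation with $2$-adic and real places when $k=\Q$, and a local class field theory computation when $k=\Q_2$.
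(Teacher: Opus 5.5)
\textbf{There is a genuine gap: the invariant $\rho_i$ is identically zero, so the map you assemble detects nothing.}

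Take any closed point $P$ with $\kappa=k(P)$, and any $\alpha\in H^1(K_P,\Z/8\Z)$.
\begin{itemize}
\item If $i\in\kappa$, then $16=(1+i)^8$ is an eighth power in $K_P$, so $\alpha\cup(16)=0$.
\item If $i\notin\kappa$, the Galois group of $\kappa$ acts on $\mu_8$ through a subgroup of $(\Z/8\Z)^*$ containing $3$ or $7$. Hence the group $H^0(\kappa,\Z/8\Z(-1))=\Hom_{\rm Gal}(\mu_8,\Z/8\Z)$, which contains $\partial_P\alpha$, has order $2$. In other words, $\Z/8\Z$-extensions of $K_P$ have ramification index at most $2$. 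It follows that $4\alpha$ is unramified. Since $(16)=4\cdot(2)$, we get $\alpha\cup(16)=(4\alpha)\cup(2)$, a cup product of two unramified classes, whose residue is zero.
\end{itemize}
So $\partial_P(\alpha\cup(16))=0$ for every local class at every point, and a fortiori for every global $\chi$. Faddeev's reciprocity therefore gives no constraint at all.

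This also shows that the local classes you want do not exist. A residue of order $8$ forces $\mu_8\subset k_i$, and then $16$ is an eighth power in $k_i$. Your heuristic value $N_{k_i|k}(16)=256=2^8$ is itself an eighth power in $k$; it also lives in $H^1(k,\mu_8)$, not $H^1(k,\Z/8\Z)$. No choice of the $d_i$, and no use of higher-degree points, repairs this.

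The missing idea is that Wang's obstruction sits in the unramified part, not in the ramification. At a point with residue field $\Q_2$, it is carried by an unramified class specializing to the unramified character of order $8$ of $\Q_2$. What detects it is the specialization $\bar\alpha\cup(16)\in{\rm Br}(\Q_2)$, which has invariant $1/2$; this is invisible to residues in ${\rm Br}(K)$. A reciprocity argument would have to go one degree up, pairing with classes like $(16)\cup(f)\in H^2(K,\mu_8^{\otimes 2})$ with the divisor of $f$ supported on $T$. This is the duality behind the sequence (\ref{shaex}).

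The paper avoids all of this with a flasque resolution $1\to\Z/8\Z\to F\to P\to 1$:
\begin{itemize}
\item By Lemma \ref{keylem}, $H^1(K,F)=H^1(k,F)$ and $H^1(K_v,F)\cong H^1(k(v),F)$, the cokernel becomes that of the diagonal map $H^1(k,F)\to\prod_{v\in T}H^1(k(v),F)$.
\item Wang's example gives $H^1(\Q_2,F)\neq 0$. Passing to the henselization, it gives $H^1(L,F)\neq0$ for some number field $L\subset\Q_2^h$.
\item One then takes many points of $\P^1_k$ with residue field $\Q_2$ (resp.\ $L$), using that $H^1(k,F)$ is finite.
\end{itemize}
Repeating the same residue field is exactly what makes the defect grow. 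Your expectation that points with equal residue fields give collapsing constraints is mistaken: global classes of $F$ are constant, so each such point contributes an independent copy of $H^1(L,F)$.
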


In fact, in the case $k=\Q_2$ we shall find examples where the $v\in T$ come from rational points.

\begin{cor} \label{corcount2}

Let $k=\Q$ (resp. $k=\Q_2$).
If $T$ is an infinite set of discrete valuations of $K$ coming
from closed points of ${\bf P}^1_\Q$ (resp. from rational points
of ${\bf P}^1_{\Q_2}$), the quotient $\left(\prod_{v\in T} H^1(K_v,\Z/8\Z)\right)/\overline{H^1(K, \Z/8\Z)}$ can be infinite.
\end{cor}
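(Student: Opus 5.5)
The plan is to deduce the corollary from Theorem \ref{counter2}, using the description of the closure $\overline{H^1(K,M)}$ recalled in the introduction: an element of the product lies in the closure exactly when, for every finite subset $S\subset T$, its projection to $\prod_{v\in S}$ lies in the image of $(r_v)_{v\in S}$. By Corollary \ref{corgw}, all the quotients involved are $\F_2$-vector spaces. It therefore suffices to produce an infinite $T$ for which the quotient $Q_T:=\left(\prod_{v\in T} H^1(K_v,\Z/8\Z)\right)/\overline{H^1(K,\Z/8\Z)}$ has dimension at least $n$ for every $n$.

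\textbf{Comparison with finite subsets.} For a finite subset $S\subset T$, write $C_S$ for the cokernel of $H^1(K,\Z/8\Z)\to\prod_{v\in S}H^1(K_v,\Z/8\Z)$. The projection $\prod_{v\in T}\to\prod_{v\in S}$ is surjective. It maps $\overline{H^1(K,\Z/8\Z)}$ into ${\rm Im}(r_v)_{v\in S}$, by the description of the closure recalled above. Hence it induces a surjection $Q_T\to C_S$, and so $\dim_{\F_2}Q_T\ge\dim_{\F_2}C_S$ for every finite $S\subset T$.

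\textbf{Choice of $T$.} Theorem \ref{counter2} supplies finite sets $T_n$ with $\dim C_{T_n}\ge n$: closed points for $k=\Q$, and rational points for $k=\Q_2$ by the remark following the theorem. Take $T=\bigcup_n T_n$. Since there are only countably many valuations of each kind, $T$ is countable. If the union happens to be finite, we may enlarge it by infinitely many further points of the required kind. Any $T\supset T_n$ works, because the surjection $Q_T\to C_{T_n}$ remains valid. Then $\dim Q_T\ge n$ for all $n$, so $Q_T$ is infinite.

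\textbf{Main obstacle.} The only delicate point is to check that the constructions in the proof of Theorem \ref{counter2} really satisfy the hypothesis in the resp. case, namely that they yield arbitrarily large cokernels using rational points of $\P^1_{\Q_2}$. That is exactly the content of the remark after Theorem \ref{counter2}, so I would simply invoke it. Everything else is formal.
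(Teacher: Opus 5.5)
Your proof is correct and is essentially the paper's argument. For infinite $T$ the quotient surjects onto the cokernel for each finite $S\subset T$, so the unbounded finite-level cokernels of Theorem~\ref{counter2} force infiniteness; these come from Theorem~\ref{gencounter}~(2) for $\Q_2$ and from points with residue field $L$ for $\Q$. The paper actually gets a bit more for $\Q_2$: by Corollary~\ref{infiniq2}, every infinite set of rational points works. Your aside that there are only countably many valuations of each kind is false for $\P^1(\Q_2)$, but countability is never used.
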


{As a by-product of the above result, we also obtain:

\begin{cor}\label{cormu}
In the case $k=\Q_2$ the subgroup $\Sha^2_{\omega}(K,\mu_8 ^{\otimes 2})\subset H^2(K,\mu_8 ^{\otimes 2})$ consisting of elements with trivial image in
$H^2(K_v,\mu_8 ^{\otimes 2})$ for all but finitely many $v \in {\bf P}^1_k$ is infinite.
\end{cor}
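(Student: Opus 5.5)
We deduce Corollary \ref{cormu} from Theorem \ref{counter2} (case $k=\Q_2$) by Poitou--Tate type duality for the function field $K=\Q_2(t)$, as in the work of Harari--Szamuely on fields of the form $k(C)$ with $k$ $p$-adic.

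\textbf{Step 1: the global duality.} Take $M=\Z/8\Z$, so that $M' := \Hom(M,\mu_8^{\otimes 2}) = \mu_8^{\otimes 2}$. This is the right twist for cohomological dimension $3$. Let $\P^{1,(1)}_k$ be the set of closed points, viewed as valuations of $K$. There is a perfect pairing of finite-level local cohomology
\[
H^1(K_v,\Z/8\Z)\times H^2(K_v,\mu_8^{\otimes 2})\to H^3(K_v,\mu_8^{\otimes 3})\cong \Q/\Z\ (\text{or }\Z/8\Z),
\]
since $K_v$ is a complete discretely valued field whose residue field is $p$-adic. Summing these local pairings and applying the reciprocity law (residues of elements of $H^3(K,\mu_8^{\otimes 3})$ sum to zero over $\P^1_k$) gives the standard exact sequence. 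For any set $T$ of closed points, the orthogonal of the closure of the image of $H^1(K,\Z/8\Z)$ in $\prod_{v\in T}H^1(K_v,\Z/8\Z)$ is identified with the subgroup of $\Sha^2_\omega(K,\mu_8^{\otimes 2})$ of classes that vanish at every $v\notin T$. I would cite the Poitou--Tate sequence for $k(C)$ with $k$ $p$-adic, applied to the open curve $\P^1\setminus T$.

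\textbf{Step 2: dimensions.} Let $T$ be a finite set of rational points as in Theorem \ref{counter2}. By Step 1 and finiteness of the local groups, the dual of the cokernel of
\[
H^1(K,\Z/8\Z)\to\prod_{v\in T}H^1(K_v,\Z/8\Z)
\]
injects into $\Sha^2_\omega(K,\mu_8^{\otimes 2})$. More precisely, it is isomorphic to the subgroup of classes unramified, and indeed locally trivial, outside $T$. Theorem \ref{counter2} lets this cokernel have arbitrarily large $\F_2$-dimension, so $\Sha^2_\omega(K,\mu_8^{\otimes 2})$ contains finite subgroups of unbounded order and is therefore infinite.

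\textbf{Main obstacle.} The delicate point is exactly this injection: that the annihilator of the image of global $H^1$ lands in the classes that are locally trivial outside $T$, and not merely in $H^2$ of the open curve. I would handle it by working on $U=\P^1\setminus T$. The exactness of
\[
H^1(U,\Z/8\Z)\to\bigoplus_{v\in T}H^1(K_v,\Z/8\Z)\to H^2_c(U,\mu_8^{\otimes 2})^D
\]
comes from Artin--Verdier style duality for $U$ over a $p$-adic field. I would then identify the relevant part of $H^2(U,\mu_8^{\otimes 2})$ with classes in $\Sha^2_\omega$. If needed, I would shrink further and use the limit over $U$, together with the fact that $H^1(K,\Z/8\Z)=\varinjlim_U H^1(U,\Z/8\Z)$.
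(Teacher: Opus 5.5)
Your argument is correct and rests on the same input as the paper's proof. That input is the finite-$T$ duality sequence $H^1(K,\Z/8\Z)\to\prod_{v\in T}H^1(K_v,\Z/8\Z)\to\Sha^2_T(K,\mu_8^{\otimes 2})^D\to\Sha^2(K,\mu_8^{\otimes 2})^D\to 0$, which the paper takes from (\cite{izq}, Lemme 2.2) and which already settles what you call the main obstacle.

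The two proofs differ in how they use this sequence:
\begin{itemize}
\item You stay at finite $T$. Theorem \ref{counter2} makes the cokernel $C_T$ arbitrarily large for suitable finite sets of rational points, the sequence forces $|\Sha^2_T|\geq |C_T|$, and $\Sha^2_T\subset\Sha^2_\omega$.
\item The paper passes to the limit over $T$ to obtain (\ref{shaex}) for the product over all closed points. It then feeds in the infinite quotient of Corollary \ref{infiniq2} and invokes the finiteness of $\Sha^2(K,\mu_8^{\otimes 2})$ from \cite{pcwa}.
\end{itemize}
Your version needs no limit argument and no closure in an infinite product. The paper's version gives the slightly more precise statement that the whole infinite defect over all closed points embeds into $\Sha^2_\omega(K,\mu_8^{\otimes 2})^D$.

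A few details need fixing:
\begin{itemize}
\item The sequence gives $C_T^D\cong\Sha^2_T(K,\mu_8^{\otimes 2})/\Sha^2(K,\mu_8^{\otimes 2})$. So the dual of the cokernel is a quotient of $\Sha^2_T$, not $\Sha^2_T$ itself as you assert; that would require $\Sha^2(K,\mu_8^{\otimes 2})=0$, which you do not justify. The surjection $\Sha^2_T\to C_T^D$ is all your counting argument uses, so the conclusion is unaffected.
\item The local pairing and the reciprocity law take values in $H^3(\,\cdot\,,\mu_8^{\otimes 2})$, not in $H^3(\,\cdot\,,\mu_8^{\otimes 3})$.
\item The third term of your sequence over $U$ should be $H^2(U,\mu_8^{\otimes 2})^D\cong H^2_c(U,\Z/8\Z)$, not $H^2_c(U,\mu_8^{\otimes 2})^D$.
\end{itemize}
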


The interest in the groups $\Sha^2_{\omega}(K,\mu_m ^{\otimes 2})$ stems from the fact that they control weak approximation on quotients of semisimple simply connected $K$-groups by finite abelian constant subgroups. Whether they can be nontrivial was asked by M. L. Nguyen in (\cite{manlinh},
Question 3 at the end of Section 3.4).}

We shall prove the above statements in more general form in Sections \ref{sec1} and \ref{sec2}.

\section{Preliminaries}

Our main tool in studying Questions \ref{ques} is Saltman's method in the adaptation of Colliot-Th\'el\`ene and Sansuc \cite{ctsja}. If $M$ is a group of multiplicative type over a field $K$, (\cite{ctsja}, Proposition 1.3) gives an exact sequence
\begin{equation}\label{fres}
1\to M\to F\to P\to 1
\end{equation}
with $F$ a flasque and $P$ a quasi-trivial torus, called a {\em flasque resolution of $M$.} Recall that a torus $F$ is {\em flasque} if its cocharacter
 group $\widecheck F$ satisfies $H^1(H, \widecheck F)=0$ for
 every subgroup $H$ of the Galois group of a finite Galois field
 extension that splits $F$, and a torus $P$ is {\em quasitrivial} if  its character module is a permutation module.
Here if $M$ splits (i.e. becomes diagonalizable) over a finite Galois extension $L|K$, then we may find $F$ and $P$ that split over $L$ as well.

The following lemma, which will serve several times, is a slight variant of an argument in the proof of (\cite{ctsja}, Proposition 8.4).

\begin{lemma}\label{keylem}
In the above situation let $T$ be a finite set of discrete valuations of $K$ and let $K_v$ be the completion of $K$ with respect to $v\in T$. We have a canonical isomorphism
$$
{\rm Coker}(H^1(K, M)\to\prod_{v\in T} H^1(K_v,M))\cong {\rm Coker}(H^1(K, F)\to \prod_{v\in T} H^1(K_v,F)).
$$
\end{lemma}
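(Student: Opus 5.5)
The plan is to compare the long exact cohomology sequences of the flasque resolution (\ref{fres}) over $K$ and over each $K_v$, and then read off the isomorphism of cokernels by a diagram chase. Applying Galois cohomology to $1\to M\to F\to P\to 1$ over $K$ and over $K_v$ gives a commutative diagram with exact rows
$$
\begin{CD}
P(K) @>>> H^1(K,M) @>>> H^1(K,F) @>>> H^1(K,P)\\
@VVV @VVV @VVV @VVV\\
\prod_{v\in T} P(K_v) @>>> \prod_{v\in T} H^1(K_v,M) @>>> \prod_{v\in T} H^1(K_v,F) @>>> \prod_{v\in T} H^1(K_v,P).
\end{CD}
$$
Since $P$ is quasi-trivial, it is a product of Weil restrictions $R_{L_i|K}{\bf G}_m$. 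Shapiro's lemma and Hilbert's Theorem 90 then give $H^1(K,P)=0$ and $H^1(K_v,P)=0$; for the latter one uses that $P\times_K K_v$ is again quasi-trivial. Consequently both maps $H^1(K,M)\to H^1(K,F)$ and $\prod_v H^1(K_v,M)\to\prod_v H^1(K_v,F)$ are surjective.

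Next I will show that $P(K)\to\prod_{v\in T}P(K_v)$ has dense image, i.e. that $P$ has weak approximation at the finite set $T$. Write $P(K)=\prod_i L_i^*$ and $P(K_v)=\prod_i (L_i\otimes_K K_v)^*=\prod_i\prod_{w\mid v}L_{i,w}^*$. The required density is then exactly the approximation theorem for the finitely many inequivalent valuations $w$ of $L_i$ lying over the $v\in T$. Here "finitely many" holds because $L_i|K$ is finite, and the decomposition $L_i\otimes_K K_v\cong\prod_{w\mid v}L_{i,w}$ holds because $L_i|K$ is finite separable.

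The step I expect to be the main obstacle is the next one: turning density into surjectivity onto $\prod_v H^1(K_v,M)$ modulo the image of $P(K)$. The boundary map $P(K_v)\to H^1(K_v,M)$ is locally constant, because its fibres are cosets of the image of $F(K_v)$, and that image is open. Openness should follow from the implicit function theorem for the \'etale (char $0$, or $M$ finite \'etale) morphism $F\to P$ over the complete field $K_v$. If $M$ is not étale, I would instead argue that $P(K_v)^n\subset$ image for suitable $n$ and that $n$-th powers are open by Hensel. So the image of $P(K)$ in $\prod_v H^1(K_v,M)$ equals the image of all of $\prod_v P(K_v)$.

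Finally, the diagram chase. I take $\phi:\mathrm{Coker}_M\to\mathrm{Coker}_F$ to be the map induced by the vertical arrows; it is surjective by the surjectivity of the local map $M\to F$. For injectivity, let $(\alpha_v)$ map to the image of some $\beta\in H^1(K,F)$. Lift $\beta$ to $\alpha\in H^1(K,M)$. Then $(\alpha_v)-(r_v(\alpha))$ dies in $\prod H^1(K_v,F)$, so it comes from $\prod P(K_v)$. By the previous step it therefore comes from $P(K)$, hence lies in the image of $H^1(K,M)$. Thus $(\alpha_v)$ is a global class, and $\phi$ is an isomorphism, canonical since it is induced by the maps of the resolution.
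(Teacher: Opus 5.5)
Your proof is correct and is essentially the paper's own argument: the same comparison diagram, surjectivity onto the $H^1$ of $F$ from Hilbert 90 and Shapiro's lemma for the quasi-trivial torus $P$, open image of $\prod_{v\in T}F(K_v)\to\prod_{v\in T}P(K_v)$ by the implicit function theorem, density of $P(K)$ by weak approximation, and the same diagram chase. You should drop your fallback for non-\'etale $M$, though: in characteristic $p$ the subgroup $K_v^{*p}$ is not open in $K_v^*$, and (with fppf cohomology) the statement actually fails for $M=\mu_p$ with $F=P={\bf G}_m$ over $K=\F_p(t)$ and $v$ the $t$-adic place, because the right-hand cokernel is $0$ while the countable group $K^*/K^{*p}$ cannot surject onto the uncountable $K_v^*/K_v^{*p}$ --- so the lemma tacitly requires $M$ smooth, as in all of the paper's applications.
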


\begin{proof}
Consider the commutative exact diagram
$$
\begin{CD}
F(K) @>>> P(K) @>>> H^1(K,M) @>>> H^1(K, F) @>>> 0 \\
@VVV @VVV @VVV @VVV  \\
\prod_{v\in T} F(K_v) @>>> \prod_{v\in T} P(K_v) @>>> \prod_{v\in T} H^1(K_v,M) @>>> \prod_{v\in T} H^1(K_v, F) @>>> 0
\end{CD}
$$
where the zeros on the right come from the fact that $P$ has trivial first cohomology over any extension of $K$ by Hilbert's Theorem 90 and Shapiro's lemma. We endow the groups $F(K_v)$ and $P(K_v)$ with the $v$-adic topology.
The first map in the lower row has open image by the implicit function theorem and the second vertical map has dense image by the weak approximation lemma. The lemma follows by a diagram chase.
\end{proof}

The following obvious consequence will be crucial for our constructions.

\begin{cor}\label{corkeylem}
If ${\rm Coker}(H^1(K, M)\to \prod_{v\in T} H^1(K_v,M))\neq 0$, then $H^1(K_v, F)\neq 0$ for some $v\in T$.
\end{cor}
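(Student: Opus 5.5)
The plan is to argue by contraposition directly from Lemma \ref{keylem}. Assume that $H^1(K_v,F)=0$ for every $v\in T$. Then the product $\prod_{v\in T}H^1(K_v,F)$ is the trivial group. In particular the cokernel of $H^1(K,F)\to\prod_{v\in T}H^1(K_v,F)$ vanishes, since it is a quotient of a trivial group.

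Next we transport this vanishing to $M$. By the canonical isomorphism of Lemma \ref{keylem}, which applies because $T$ is a finite set of discrete valuations and (\ref{fres}) is a flasque resolution of $M$, the cokernel of $H^1(K,M)\to\prod_{v\in T}H^1(K_v,M)$ is isomorphic to the cokernel for $F$. It therefore vanishes as well. This contradicts the hypothesis, so some $H^1(K_v,F)$ must be nonzero.

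There is no genuine obstacle here; all the content sits in Lemma \ref{keylem}. The only point to check is that the corollary is stated in exactly the situation of that lemma, namely $T$ finite and $F$ the flasque torus appearing in (\ref{fres}).
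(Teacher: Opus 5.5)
Your proposal is correct and is exactly the intended argument: the paper gives no separate proof, calling the corollary an obvious consequence of Lemma \ref{keylem}. If every $H^1(K_v,F)$ vanishes, then the cokernel for $F$ is trivial, and hence so is the cokernel for $M$ via the canonical isomorphism, with $T$ finite as the lemma requires.
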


The lemma also yields the result of Colliot-Th\'el\`ene and Sansuc
(\cite{ctsja}, Theorem~8.4) mentioned in the introduction.

\begin{cor}[\cite{ctsja}, Proposition 8.4 (ii)]\label{cts}
The Grunwald--Wang theorem holds when $M$ is a group of multiplicative type that becomes diagonalizable over a metacyclic extension of $K$.
\end{cor}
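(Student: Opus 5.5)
The plan is to deduce the corollary from Lemma \ref{keylem}, reducing the Grunwald--Wang statement for $M$ to a vanishing statement for the flasque torus $F$. By definition of the closure in the product topology, it suffices to treat finite sets $T$. So fix a finite $T$. I will show that ${\rm Coker}(H^1(K,M)\to\prod_{v\in T}H^1(K_v,M))=0$.

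Let $L|K$ be a finite Galois extension over which $M$ becomes diagonalizable, with $G=\Gal(L|K)$ metacyclic. I choose a flasque resolution (\ref{fres}) $1\to M\to F\to P\to 1$ in which $F$ and $P$ are also split by $L$, as the text above allows. Lemma \ref{keylem} then identifies the cokernel for $M$ with the cokernel for $F$. It is therefore enough to prove $H^1(K_v,F)=0$ for every $v\in T$; the cokernel for $F$ is then trivially zero.

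The key input is the theorem of Colliot-Th\'el\`ene--Sansuc (and Endo--Miyata) on flasque modules over metacyclic groups. If $G$ is metacyclic, meaning all Sylow subgroups are cyclic, or more generally every Sylow subgroup is cyclic or the group is as in their Proposition 2 of the flasque-torus papers, then every flasque $G$-lattice is invertible, i.e.\ a direct summand of a permutation lattice. Hence $F$ is a direct factor of a quasi-trivial torus $Q$ split by $L$. Now pass to $K_v$. The torus $F_{K_v}$ is split by $L_w$ with group the decomposition group $G_w\subset G$, which is again metacyclic, and $F_{K_v}$ remains a direct factor of the quasi-trivial torus $Q_{K_v}$. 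By Hilbert 90 and Shapiro's lemma $H^1(K_v,Q)=0$, so its direct summand satisfies $H^1(K_v,F)=0$. This holds for all $v\in T$, so the cokernel vanishes. That gives the result for finite $T$, and by the description of the closure, also $\overline{H^1(K,M)}=\prod_{v\in T}H^1(K_v,M)$ for infinite $T$.

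The main obstacle is the invertibility of flasque lattices for metacyclic groups. I would not reprove it but cite it from Colliot-Th\'el\`ene--Sansuc, \emph{La R-\'equivalence sur les tores}, Proposition 2. The points to check carefully are that invertibility is preserved under restriction to subgroups, which is clear since permutation modules restrict to permutation modules, and that the resolution can be taken split over the same $L$.
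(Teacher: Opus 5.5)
Your proof is correct and is essentially the paper's argument. You take a flasque resolution split by the metacyclic extension, use Endo--Miyata (or Colliot-Th\'el\`ene--Sansuc, Proposition 2) to see that $F$ is a direct factor of a quasi-trivial torus, deduce $H^1(K_v,F)=0$ by Hilbert 90 and Shapiro's lemma, and conclude with Lemma \ref{keylem}. The paper additionally notes $H^1(K,F)=0$, which is not needed for the cokernel to vanish.
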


Recall that a finite group is called metacyclic if its $p$-Sylow subgroups are cyclic.

\begin{proof} Since $M$ is split by a metacyclic extension, so is $F$. But then by a theorem of Endo and Miyata (\cite{endomiy}, Theorem 1.5; see also \cite{requ},
Proposition 2, p. 184) the torus
$F$ is a direct factor in a quasi-trivial torus, hence $H^1(K,F)=0$. The same argument gives $H^1(K_v,F)=0$ for all $v$. Now apply the lemma.
\end{proof}

In fact, with a little more work, Colliot-Th\'el\`ene and Sansuc get the vanishing of the groups $H^1(K_v,F)$ under the weaker assumption that $M_{K_v}$ is split by a metacyclic extension, which suffices for the Grunwald-Wang property. For us what is important from this discussion is the following corollary:

\begin{cor}\label{corgw} Let $n$ be an integer not divisible by the characteristic of $K$.
If $\sqrt{-1}\in K$, then the Grunwald--Wang theorem holds for $M=\Z/n\Z$. In general the abelian group ${\rm Coker}(H^1(K, \Z/n\Z)\to \prod_{v\in T} H^1(K_v,\Z/n\Z))$ is  of exponent 2.
\end{cor}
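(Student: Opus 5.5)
The plan is to deduce both assertions from Corollary \ref{cts} and a restriction--corestriction argument.

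First suppose $\sqrt{-1}\in K$. Write $n=2^r m$ with $m$ odd. Then $\Z/n\Z\cong \Z/2^r\Z\times\Z/m\Z$, and the cokernel decomposes accordingly, so we may treat each factor separately. The group $\Z/n\Z$ is constant, and its Cartier dual is $\mu_n$. By the flasque resolution, the relevant splitting field is the one over which $M$ becomes diagonalizable. Since $M$ is constant, I would rather use the equivalent description through the dual: the resolution \eqref{fres} of $M=\Z/n\Z$ is split by $K(\mu_n)$. Indeed, $M$ is of multiplicative type with character module $\Hom(\Z/n\Z,\bar K^\times)=\mu_n$, so it becomes diagonalizable over $K(\mu_n)$. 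Hence it suffices to show that $\Gal(K(\mu_n)|K)$ is metacyclic, i.e. that all its Sylow subgroups are cyclic. This group embeds into $(\Z/n\Z)^\times=\prod_p(\Z/p^{a_p}\Z)^\times$. For odd $p$ the factors are cyclic. The only possible non-cyclic contribution comes from the $2$-part $(\Z/2^r\Z)^\times$, which is of the form $\Z/2\times\Z/2^{r-2}$ generated by $-1$ and $5$. If $\sqrt{-1}\in K$, then $\Gal(K(\mu_{2^r})|K)$ lands in the subgroup of units $\equiv 1 \bmod 4$, which is cyclic, generated by $5$. For a general (not necessarily cyclic-factored) embedding, the Sylow $p$-subgroup of $\Gal(K(\mu_n)|K)$ is a subgroup of the Sylow $p$-subgroup of $\prod_p(\Z/p^{a_p})^\times$. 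I have to be careful here: for odd $p$ that Sylow subgroup is a product over several primes $q$ of $p$-parts of cyclic groups, which need not be cyclic. So metacyclicity in the strict Sylow sense can fail. To avoid this, I will split $\Z/n\Z=\prod_q\Z/q^{a_q}\Z$ and treat each prime power separately. For $\Z/q^a\Z$ the splitting field is $K(\mu_{q^a})$, whose Galois group is cyclic for odd $q$, and cyclic for $q=2$ when $\sqrt{-1}\in K$. In each case Corollary \ref{cts} applies, and the cokernel, being the product of the cokernels, vanishes. This bookkeeping of splitting fields is the step I expect to need care.

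For the general statement, set $L=K(\sqrt{-1})$ with $[L:K]\le 2$. Given $(\alpha_v)\in\prod_{v\in T}H^1(K_v,\Z/n\Z)$, I restrict to $L$. For each $v$, I consider the places $w$ of $L$ above $v$, with $L\otimes_K K_v=\prod_{w|v}L_w$. I then apply the first part to $L$ and the finite set $T_L$ of places above $T$. Since $\sqrt{-1}\in L$, I find $\beta\in H^1(L,\Z/n\Z)$ with $\beta_w={\rm res}(\alpha_v)$ for all $w|v$. Applying corestriction and using its compatibility with completions, $\sum_{w|v}{\rm cores}_{L_w|K_v}$, I get that ${\rm cores}(\beta)\in H^1(K,\Z/n\Z)$ has local components ${\rm cores}\circ{\rm res}(\alpha_v)=[L:K]\alpha_v$. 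Hence $2(\alpha_v)$ lies in the image, and the cokernel has exponent $2$; if $L=K$ it is trivial.

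Two points need a short check: the decomposition $L\otimes_K K_v\cong\prod_{w|v}L_w$, together with the corresponding compatibility of corestriction with restriction to completions, and the fact that the places above $T$ are discrete valuations, so that Lemma \ref{keylem} applies to $L$.
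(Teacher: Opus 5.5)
Your proposal is correct and follows the paper's proof. The first statement goes through Corollary \ref{cts} with splitting field $K(\mu_n)$. The second goes through restriction to $K(\sqrt{-1})$ followed by corestriction, using $\mathrm{Cor}\circ\mathrm{Res}=[L:K]$ place by place. As the paper does, you should say explicitly that $T$ is first reduced to the finite case, since your $\beta$ is only produced for finite $T_L$.

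The one real difference is your reduction to prime-power $n$ before invoking metacyclicity, and it is needed. The paper asserts directly that the Sylow subgroups of $\Gal(K(\mu_n)|K)$ are cyclic (for $p=2$ when $\sqrt{-1}\in K$), and this fails for composite $n$. For $K=\Q(\sqrt{-1})$ one gets $\Gal(K(\mu_{24})|K)\cong(\Z/2\Z)^2$, and for $n=91$ a $3$-Sylow subgroup $(\Z/3\Z)^2$.

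The mixing of Sylow parts that you flagged for odd $p$ also happens at $p=2$, as $n=24$ shows. Your splitting $\Z/n\Z\cong\prod_q\Z/q^{a_q}\Z$, under which the cokernel decomposes as a product, handles both cases.
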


\begin{proof}
For the first statement, note that $\Z/n\Z$ as a group of multiplicative type is split by the Galois extension $K(\mu_n)|K$. By (\cite{cassfro}, beginning of Chapter 3, p. 85),
the $p$-Sylow subgroups of $\Gal(K(\mu_n)|K)$ are always cyclic for $p\neq 2$ and also for $p=2$ when $\sqrt{-1}\in K$ (`we are not in the special case'), so the previous corollary applies. For the second statement  we may assume $T$ finite and denote by $T'$ the set of places of $K(\sqrt{-1})$ dividing those in $T$. Given $(\alpha_v)\in \prod_{v\in T} H^1(K_v,\Z/n\Z)$, its restriction ${\rm Res}(\alpha_v)\in \prod_{w\in T'} H^1(K(\sqrt{-1})_w,\Z/n\Z)$ is the diagonal image of some $\alpha'\in H^1(K(\sqrt{-1}),\Z/n\Z)$ by the first statement. By functoriality of corestriction $2(\alpha_v)=({\rm Cor}\circ{\rm Res})(\alpha_v)$ is then the image of ${\rm Cor}(\alpha')\in H^1(K,\Z/n\Z)$.
\end{proof}

{

\begin{rema}\rm Though we shall not need it, we remark for the sake of completeness that when $K$ has characteristic $p>0$, the Grunwald--Wang theorem holds for finite \'etale $p$-torsion $M$. This is proven by the same arguments as in the case of global function fields (see \cite{nsw}, Theorem 9.2.5).

\end{rema}}

\section{The general construction}\label{sec1}

In this section we assume $k$ is a field of characteristic 0, and let $M$ be a finite \'etale commutative $k$-group scheme. As in (\ref{fres}), we choose a flasque resolution
$$
1\to M\to F\to P\to 1
$$
over $k$. Recall from (\cite{ctsja}, Lemma 0.6, exact sequence (0.6.2)
joint with paragraph 0.2)
that though the flasque torus $F$ in the above resolution is not unique, it is unique up to multiplication by a quasi-trivial torus. Since $H^1$ of a quasi-trivial torus is trivial, the group $H^1(k, F)$ depends only on $M$ and not on the choice of flasque resolution. Therefore the assumptions in the following theorem are unambiguous.

\begin{theo} \label{gencounter}

 With notation as above, set $K:=k(t)$ and
let moreover $T$ be a set of discrete valuations of $K$ coming from rational points of the projective line ${\bf P}^1_k$.

The quotient
$\left(\prod_{v \in T} H^1(K_v,M)\right)/\overline{H^1(K,M)}$
is infinite under either of the following conditions:
\begin{enumerate}
\item The group $H^1(k, F)$ is infinite and $T$ has at least 2 elements.
\item The group $H^1(k, F)$ is nontrivial and $T$ is infinite.
\end{enumerate}
\end{theo}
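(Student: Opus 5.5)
The plan is to reduce to the flasque torus $F$ via Lemma \ref{keylem} and then find a ``specialization'' invariant that takes the same value at all rational points for every global class. First I treat a finite subset $S\subset T$. By Lemma \ref{keylem} (applied over $K$ to the base change of the resolution $1\to M\to F\to P\to 1$),
$$
{\rm Coker}\Big(H^1(K,M)\to\prod_{v\in S}H^1(K_v,M)\Big)\cong {\rm Coker}\Big(H^1(K,F)\to\prod_{v\in S}H^1(K_v,F)\Big).
$$
For $v\in S$ attached to a rational point $a\in{\bf P}^1(k)$, the completion is $K_v\cong k((\pi_v))$ with residue field $k$. Since $F$ is defined over $k$, the inclusion $k\subset k((\pi_v))$ induces $\iota_v: H^1(k,F)\to H^1(K_v,F)$. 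This map is split injective: the retraction $\rho_v:H^1(K_v,F)\to H^1(k,F)$ comes from the standard decomposition of $H^1$ of a torus over a complete discretely valued field with residue field $k$ into an unramified part, identified with $H^1(k,F)$, and a residue part.

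The key step is to show that for every $\alpha\in H^1(K,F)$ the elements $\rho_v(\alpha_{K_v})\in H^1(k,F)$ all coincide as $v$ runs through the rational points. Here I use the flasqueness of $F$. By Colliot-Th\'el\`ene--Sansuc, for a flasque torus and a smooth integral variety $X$, the restriction map $H^1(X,F)\to H^1(k(X),F)$ is surjective. Applied to $X={\bf P}^1_k$, this lets me lift $\alpha$ to a class $\tilde\alpha\in H^1({\bf P}^1_k,F)$. I then check that $\rho_v(\alpha_{K_v})$ is the fibre $\tilde\alpha(a)\in H^1(k,F)$ at the point $a$.

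To see that this fibre is independent of $a$, I use the Hochschild--Serre spectral sequence for a splitting field $L|k$ of $F$, with $G=\Gal(L|k)$. It shows that $H^1({\bf P}^1_k,F)$ is an extension of a subgroup of $({\rm Pic}({\bf P}^1_L)\otimes\widecheck F)^G=\widecheck F^{G}$ by $H^1(k,F)$, the latter sitting inside via pullback along ${\bf P}^1_k\to{\rm Spec}\,k$. Write $\tilde\alpha=\beta_{{\bf P}^1}+\gamma$ with $\beta\in H^1(k,F)$ and $\gamma$ a torsor of the form $\mathcal O(n)\otimes$ cocharacter. The constant part $\beta$ has fibre $\beta$ at every rational point. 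The Picard part has fibre a torsor under $F$ obtained from a line bundle on ${\rm Spec}\,k$, which is trivial. Hence $\tilde\alpha(a)=\beta$ for all $a\in{\bf P}^1(k)$.

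Consequently the image of $H^1(K,F)$, composed with $\prod_v\rho_v$, lies in the diagonal of $H^1(k,F)^S$. Since $\prod_v\rho_v$ is surjective, the cokernel surjects onto $H^1(k,F)^S/\Delta\cong H^1(k,F)^{|S|-1}$.
\begin{itemize}
\item Case (1): if $|S|\ge 2$ and $H^1(k,F)$ is infinite, this quotient is already infinite.
\item Case (2): if $T$ is infinite and $H^1(k,F)\neq0$, the finite cokernels have order at least $|H^1(k,F)|^{|S|-1}$, which is unbounded in $S$.
\end{itemize}
To pass from finite $S$ to the closure quotient, I note that for $S\subset T$ finite the projection $\prod_{v\in T}\to\prod_{v\in S}$ maps $\overline{H^1(K,M)}$ into the image of $H^1(K,M)$, by the description of the closure in the introduction. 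So $\big(\prod_{v\in T}H^1(K_v,M)\big)/\overline{H^1(K,M)}$ surjects onto each finite-level cokernel, and is therefore infinite in both cases.

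The main obstacle is the key step: that the unramified component of the local image of a global class at a rational point is independent of the point. If the surjectivity $H^1({\bf P}^1_k,F)\to H^1(K,F)$ from flasqueness cannot be cited in exactly this form, the fallback is to work on an open $U\subset{\bf P}^1_k$ where $\alpha$ is unramified, containing no point of $S$. At the points of $S$ I would use purity for flasque tori, namely that their residues vanish in a suitable sense, to extend $\alpha$ across those points. Then I would run the same Hochschild--Serre computation on $U\cup S$, where ${\rm Pic}$ of a rational point is again trivial.
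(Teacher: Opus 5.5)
Your proof is correct, but it distributes the work differently from the paper.

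\textbf{The paper's route.} It computes the cokernel exactly. After Lemma~\ref{keylem} it uses (\cite{ctsja}, Corollary 2.6) to get $H^1(k,F)\cong H^1(K,F)$. It then proves $H^1(k,F)\cong H^1(\calo_v,F)\cong H^1(K_v,F)$ at each rational point, using Hensel's lemma, Nisnevich's injectivity theorem, and the surjectivity in (\cite{ctsja}, Theorem 2.2 (i)) for the complete ring $\calo_v$. So for finite $T$ the quotient is precisely $H^1(k,F)^{T}/\Delta$.

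\textbf{Your route.} You use flasqueness only once, globally, to lift $\alpha$ to $H^1({\bf P}^1_k,F)$. This is the same Theorem 2.2 (i), now for the regular scheme ${\bf P}^1_k$, so your fallback via purity is not needed. Your Hochschild--Serre computation plays the role of Corollary 2.6. Locally you only need $\iota_v$ to be split injective, which holds for any $k$-torus and avoids Nisnevich's theorem and the local surjectivity. The price is that you get only a surjection of the cokernel onto $H^1(k,F)^{|S|-1}$, not an isomorphism. That lower bound is all the theorem requires. Given its citations, the paper's route is shorter and also determines the quotient exactly.

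\textbf{Steps to make explicit.}
\begin{enumerate}
\item The decomposition $\tilde\alpha=\beta_{{\bf P}^1}+\gamma$ holds because $\chi\mapsto\chi_*[\calo(1)]$ is a homomorphism $\widecheck F^{G}\to H^1({\bf P}^1_k,F)$ lifting every element of $\widecheck F^{G}$. The extension therefore splits, and $H^1({\bf P}^1_k,F)\cong H^1(k,F)\oplus\widecheck F^{G}$.
\item The ``standard decomposition'' defining $\rho_v$ should be made precise. One way: restrict along the section $G_k\to G_{K_v}$ acting on coefficients of Puiseux series, then compose with the leading-coefficient map $\overline{K_v}^{*}\to\bar k^{*}$. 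Alternatively, injectivity of $\iota_v$ alone already embeds $H^1(k,F)^S/\Delta$ into the cokernel.
\end{enumerate}

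Your passage from finite $S\subset T$ to the closure quotient for infinite $T$ is correct. It spells out a step the paper leaves implicit.
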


Recall that the group $H^1(k, F)$ is known to be finite for many types of fields $k$ (see the next section) but may be infinite (see Section 8 of \cite{6auth} for several examples, one of which we shall use below).

We thank Jean-Louis Colliot-Th\'el\`ene for simplifying our already quite simple proof.

\begin{proof} Assume $T$ is finite. By Lemma \ref{keylem} we then have

\begin{align*}\left(\prod_{v \in T} H^1(K_v,M)\right)/&\overline{H^1(K,M)} \cong \left(\prod_{v \in T} H^1(K_v,F)\right)/ \overline{H^1(K, F)}\cong \\ &\cong {\rm Coker}\,(H^1(K,F)/H^1(k,F)\to \left(\prod_{v \in T} H^1(K_v,F)\right) /H^1(k,F)).
\end{align*}

\noindent Now recall that the natural map $H^1(k,F)\to H^1(K,F)$ is an isomorphism; this follows from the case $n=1$ of (\cite{ctsja}, Corollary 2.6) by passing to the direct limit over Zariski open sets of the affine line over $k$.   Thus we are left with the quotient $(\prod_{v \in T} H^1(K_v,F)) /H^1(k,F)$.
Let $\calo_v$ be the ring of integers of $K_v$; it has residue field $k$ as $v$ comes from a rational point. As $\calo_v$ is complete and
$F$ is flasque, we have isomorphisms
\begin{equation}\label{eqgencounter}
H^1(k,F) \simeq H^1(\calo_v,F) \simeq H^1(K_v,F).
\end{equation} Here the first isomorphism holds by (\cite{milne}, Remark~III.3.11 (a)), injectivity of the map $H^1(\calo_v,F) \to H^1(K_v,F)$ by the main result of \cite{nis} and surjectivity by
(\cite{ctsja}, Theorem~2.2 (i)). Thus if $H^1(k, F)$ is infinite, the
assertion follows in case (1). If we only know that $H^1(k, F)$ is nontrivial, then the same argument shows that the size of the quotient is unbounded as we increase $T$, whence the assertion in case (2).\end{proof}

As a first consequence, we derive Theorem \ref{counter1}
 from case (1) of Theorem \ref{gencounter} above.

\begin{proof}[Proof of Theorem \ref{counter1}] We want to
show that there exists a field $k$ of characteristic zero
 such that for every finite
set $T$ of discrete valuations of $K:=k(t)$ coming
 from rational points of ${\bf P}^1_k$
the diagonal map
$$ H^1(K,\Z/8\Z) \to \prod_{v \in T} H^1(K_v,\Z/8\Z)$$
has infinite cokernel (provided $T$ has at least 2 elements).

We start with a field $k_0$ such that in some flasque resolution
$$1 \to \Z/8\Z \to F_0 \to P_0 \to 1$$ over $k_0$
the flasque torus $F_0$ satisfies $H^1(k_0,F_0) \neq 0$. For instance
$k_0=\Q_2$ has this property thanks to Wang's counterexample and Corollary \ref{corkeylem}. Now
by (\cite{6auth}, Example~8.4)
there exists an (infinitely generated) field extension $k$ of $k_0$ such that
$H^1(k,F)$ is infinite, where $F$ is the base change of $F_0$ to $k$. Moreover, if $P$ is the base change of $P_0$ to $k$, then
$0 \to \Z/8\Z \to F \to P \to 0$
is a flasque resolution over $k$ by (\cite{ctsja}, Proposition~1.4). Now apply Theorem~\ref{gencounter} (1).
\end{proof}

\section{Arithmetic base fields}\label{sec2}

We keep notation and assumptions from the previous section. In particular, $K=k(t)$ is a rational function field but from now on we assume moreover that the base field $k$ is a finite extension of $\Q_p$ or a finitely generated extension of $\Q$. In this case the groups $H^1(K_v, F)$ are known to be finite for a flasque $k$-torus $F$. Indeed, the field $K_v$ is a complete discrete valuation field of equal characteristic zero, hence it is a formal Laurent series field over its residue field $\kappa$  (see e.g. \cite{corpslocaux}, Section II.4, Theorem 2). Therefore by (\cite{ctgp}, Theorem 3.2) the finiteness of $H^1(K_v, F)$ follows from that of $H^1(\kappa,F)$, which in turn holds by (\cite{cogal}, section III.4, Theorem 4)
in the $p$-adic case and by (\cite{ctsja}, Corollary 2.8) in the global case.

Applying Lemma \ref{keylem}, we therefore see that the groups $\left(\prod_{v \in T} H^1(K_v,M)\right)/\overline{H^1(K,M)}$ are finite for a finite set of discrete valuations $T$ and an arbitrary $k$-group $M$ of multiplicative type. However, for infinite $T$ this may not be the case. We start with the local case:

\begin{cor} \label{infiniq2}
For $K=\Q_2(t)$,
the groups $\left(\prod_{v \in T} H^1(K_v,\Z/8\Z)\right)/\overline{H^1(K,\Z/8\Z)}$ are infinite if $T$ is an infinite set of rational points of
${\bf P}^1_k$.
\end{cor}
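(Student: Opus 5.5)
This is an application of Theorem \ref{gencounter} (2) with $k=\Q_2$ and $M=\Z/8\Z$. The only hypothesis to check is that $H^1(\Q_2,F)\neq 0$ for the flasque torus $F$ in a flasque resolution $1\to \Z/8\Z\to F\to P\to 1$ over $\Q_2$. By the remark at the start of Section \ref{sec1}, this condition does not depend on the choice of resolution.

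To verify it, I use Wang's counterexample exactly as in the proof of Theorem \ref{counter1}. For $K=\Q$, $M=\Z/8\Z$ and $T=\{v_2\}$, the 2-adic valuation, the map $H^1(\Q,\Z/8\Z)\to H^1(\Q_2,\Z/8\Z)$ is not surjective. Take a flasque resolution of $\Z/8\Z$ over $\Q$. Corollary \ref{corkeylem} then gives $H^1(\Q_2,F_\Q)\neq 0$.

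The base change of this resolution to $\Q_2$ is again a flasque resolution of $\Z/8\Z$, by (\cite{ctsja}, Proposition~1.4). Hence $H^1(\Q_2,F)\neq 0$ for a flasque resolution over $k=\Q_2$, and therefore for every flasque resolution over $\Q_2$ by the independence just recalled.

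With this in hand, Theorem \ref{gencounter} (2) applies directly. For $K=\Q_2(t)$ and $T$ an infinite set of valuations coming from rational points of ${\bf P}^1_{\Q_2}$, the quotient $\left(\prod_{v\in T}H^1(K_v,\Z/8\Z)\right)/\overline{H^1(K,\Z/8\Z)}$ is infinite.

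It is worth recalling the mechanism behind that theorem. For each finite $S\subset T$, the cokernel is $\left(\prod_{v\in S}H^1(k,F)\right)/H^1(k,F)$, where $H^1(k,F)$ embeds diagonally. Its size $|H^1(k,F)|^{|S|-1}$ grows without bound as $S$ grows. Projecting the infinite quotient onto these finite cokernels then gives infiniteness.

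The only point needing care is the transfer of nontriviality of $H^1(\,\cdot\,,F)$ from the global setting of Wang's example to the base field $\Q_2$. This is settled by the compatibility of flasque resolutions with base change together with Corollary \ref{corkeylem}, as above.
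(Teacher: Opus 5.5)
Your proposal is correct and matches the paper's proof: Wang's counterexample together with Corollary \ref{corkeylem} gives $H^1(\Q_2,F)\neq 0$, and Theorem \ref{gencounter} (2) then concludes. Your explicit base-change step via (\cite{ctsja}, Proposition~1.4) and your sketch of the unboundedness argument simply spell out what the paper leaves implicit (the base-change step is the one it uses in the proof of Theorem \ref{counter1}).
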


\begin{proof}
Again by Wang's counterexample and Corollary \ref{corkeylem} the flasque torus $F$ in a flasque resolution of $\Z/8\Z$ over $\Q$ satisfies $H^1(\Q_2, F)\neq 0$, so we conclude by Theorem \ref{gencounter} (2).
\end{proof}

This proves Theorem \ref{counter2} and Corollary~\ref{corcount2}
in the case $k=\Q_2$. { Note that when $K=\Q_p(t)$ with $p>2$, the quotient in the corollary is trivial (`we are not in the special case'). Indeed, the
cyclotomic extension $\Q_p(\mu_8)|\Q_p$ is unramified, hence cyclic by (\cite{corpslocaux}, Section IV.4, Proposition 16) and therefore $K(\mu_8)|K$ is a cyclic extension as well.

\smallskip

We can also derive Corollary \ref{cormu}. Recall the context: let $T$
be a finite set of closed points of ${\bf P}^1_k$. For $i>0$ and a finite \'etale commutative $k$-group scheme $M$ we denote by $\Sha^i_{\omega}(K,M)$ (resp. $\Sha^i(K,M)$, resp. $\Sha^i_T(K,M)$)
the subgroup of classes in $H^i(K,M)$ whose image is trivial in $H^i(K_v,M)$ for all but finitely many
(resp. for all, resp. for all outside $T$) closed points $v$ of ${\bf P}^1_k$. Let us restate the corollary:

 \begin{cor}
For $K=\Q_2(t)$ the group $\Sha^2_{\omega}(K,\mu_8 ^{\otimes 2})$ is infinite.
\end{cor}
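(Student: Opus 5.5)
The plan is to deduce the statement from the unboundedness of the Grunwald--Wang cokernel for $K=\Q_2(t)$, already established via Theorem \ref{gencounter}, by means of a Poitou--Tate type duality for function fields of curves over $p$-adic fields. Here the cohomological dimension is $3$. The dual of $\Z/8\Z$ with respect to the dualizing module $\Q/\Z(2)$ is $\mu_8^{\otimes 2}$. For each closed point $v$, the field $K_v$ is a Laurent series field over a $2$-adic field, and local duality gives a perfect pairing of finite groups
$$H^1(K_v,\Z/8\Z)\times H^2(K_v,\mu_8^{\otimes 2})\to H^3(K_v,\Q/\Z(2))\cong \Q/\Z .$$

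\textbf{Step 1 (counting).} Fix a finite set $T$ of rational points of $\P^1_{\Q_2}$. The proof of Theorem \ref{gencounter} identifies the cokernel of $H^1(K,\Z/8\Z)\to\prod_{v\in T}H^1(K_v,\Z/8\Z)$ with $\bigl(\prod_{v\in T}H^1(\Q_2,F)\bigr)/H^1(\Q_2,F)$, the quotient by the diagonal. Since $H^1(\Q_2,F)\neq 0$ by Wang's counterexample and Corollary \ref{corkeylem}, this cokernel has order at least $2^{|T|-1}$. Hence it is unbounded as $T$ grows.

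\textbf{Step 2 (duality).} I would invoke the global duality exact sequence for $K$: for $T$ finite, there is an exact sequence
$$H^1(K,\Z/8\Z)\to\bigoplus_{v\in T}H^1(K_v,\Z/8\Z)\to \Sha^2_T(K,\mu_8^{\otimes 2})^D .$$
This is available in the literature (Harari--Scheiderer--Szamuely, Izquierdo). Alternatively, it can be obtained from the long exact sequence of étale cohomology with compact support on $U=\P^1\setminus T$, passed to the limit over smaller opens, together with Poincaré--Artin--Verdier duality $H^3_c(U,\Q/\Z(2))\cong\Q/\Z$ for curves over $p$-adic fields. From this sequence the Grunwald--Wang cokernel injects into $\Sha^2_T(K,\mu_8^{\otimes 2})^D$. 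By Step 1, $|\Sha^2_T(K,\mu_8^{\otimes 2})|\ge 2^{|T|-1}$.

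\textbf{Step 3 (conclusion).} By definition $\Sha^2_T(K,\mu_8^{\otimes 2})\subset\Sha^2_\omega(K,\mu_8^{\otimes 2})$ for every finite $T$. Since $\Q_2$ has infinitely many rational points, $|T|$ can be taken arbitrarily large, so $\Sha^2_\omega(K,\mu_8^{\otimes 2})$ is infinite.

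\textbf{Main obstacle.} The delicate point is Step 2: making sure the duality sequence holds with exactly the right groups. The pairing must land in $H^3$ and be compatible with the local pairings, and the map to $\Sha^2_T(K,\mu_8^{\otimes 2})^D$ must be the dual of the localization map $\Sha^2_T\to\bigoplus_{v\in T}H^2(K_v,\mu_8^{\otimes 2})$. Only injectivity of the cokernel into the dual is needed, not full exactness on the right. The cleanest route is to cite the existing Poitou--Tate sequence for curves over $p$-adic fields, and otherwise to run the compact-support argument at the level of $U$ and pass to the direct limit.
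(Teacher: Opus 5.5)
Your argument is correct and is essentially the paper's: both combine Izquierdo's Poitou--Tate type exact sequence $H^1(K,\Z/8\Z)\to\prod_{v\in T}H^1(K_v,\Z/8\Z)\to\Sha^2_T(K,\mu_8^{\otimes 2})^D$ with the large Grunwald--Wang cokernel supplied by Theorem \ref{gencounter}. The only difference is in the bookkeeping. The paper passes to the limit over $T$ and uses the infinite cokernel of Corollary \ref{infiniq2}, also citing the finiteness of $\Sha^2(K,\mu_8^{\otimes 2})$. You stay at finite $T$, using $|{\rm Coker}|\geq 2^{|T|-1}$ together with $\Sha^2_T\subset\Sha^2_\omega$, which is slightly leaner since it needs neither the limit argument nor that finiteness result.
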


\begin{proof} For $M$ of order $n$ as above we have an exact sequence
\begin{equation}\label{shaex}0 \to \overline{{H^1(K,M)}} \to \prod_{v} H^1(K_v,M) \to
\Sha^2_{\omega}(K,\Hom(M,\mu_n ^{\otimes 2}))^D \to \Sha^2(K,\Hom(M,\mu_n ^{\otimes 2}))^D \to 0,\end{equation}
where the superscript ${}^D$ denotes $\Q/\Z$-linear dual and in the product $v$ runs over all closed points of ${\bf P}^1_k$. This follows after passing to the direct limit over all finite sets $T$ of $v$ as above from the exact sequence
$$H^1(K,M) \to \prod_{v \in T} H^1(K_v,M) \to
\Sha^2_{T}(K,\Hom(M,\mu_n ^{\otimes 2}))^D \to \Sha^2(K,\Hom(M,\mu_n ^{\otimes 2}))^D \to 0$$
proven in more general form in (\cite{izq}, Lemme 2.2). In exact sequence (\ref{shaex}) the map $\overline{{H^1(K,M)}} \to \prod_{v} H^1(K_v,M)$ has infinite cokernel by Corollary~\ref{infiniq2} and the group $\Sha^2(K,\Hom(M,\mu_n ^{\otimes 2}))$ is finite and dual to $\Sha^2(K,M)$ by (\cite{pcwa}, Theorem~1.4). This implies the statement.
\end{proof}

\begin{remas}\rm ${}$
\begin{enumerate}
\item The previous two corollaries hold not just for 8 but for any $2^r$ with $r\geq 3$.
\item Lucas Lagarde informs us that he can construct infinitely many explicit \mbox{classes} in the group $\Sha^2_{\omega}(K,\mu_8^{\otimes 2})$ considered above; see his forthcoming paper \cite{lagarde} for details.
\item Consider still the case $K=\Q_2(t)$ and the flasque torus $F$ in the proof of Corollary  \ref{infiniq2}.   By (\cite{pcwa}, Theorem 4.3 (b)) we have an exact sequence
$$
0\to\overline{H^1(K,F)}\to\prod_{v} H^1(K_v,F)\to \Sha^2_\omega(K,F')^D\to \Sha^1(K,F)\to 0
$$
analogous to (\ref{shaex}), where $F'$ is the dual torus to $F$ (i.e. the torus whose character group is the cocharacter group of $F$). Again the last group is known to be finite (\cite{pcwa}, Theorem 2.4), so Corollary \ref{infiniq2} implies that $\Sha^2_\omega(K,F')$ must be infinite. This gives an example of a torus with infinite $\Sha^2_\omega$ different from that of (\cite{pcwa}, Prop. 4.5).
\end{enumerate}
\end{remas}
}

We finally turn to the proof of Theorem \ref{counter2} in the case of $\Q(t)$. We need the following lemma which must be well known:

\begin{lemma} \label{closedpoint}
Let $k$ be an infinite field, and let $L$ be a finite separable extension of $k$.
There exist infinitely many closed points of ${\bf A}_k ^1$ with
residue field isomorphic to $L$.
\end{lemma}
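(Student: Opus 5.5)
We will use the primitive element theorem. Since $L|k$ is finite separable, $L=k(\theta)$ for some $\theta\in L$. Let $f\in k[t]$ be its minimal polynomial, of degree $d=[L:k]$. The closed point of ${\bf A}^1_k$ cut out by $f$ has residue field $k[t]/(f)\cong L$. The task is therefore to produce infinitely many \emph{distinct} irreducible monic polynomials $g\in k[t]$ with $k[t]/(g)\cong L$. Distinct monic irreducible polynomials define distinct closed points, so this suffices.

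To get them, we let the primitive element vary. For $a\in k$ consider $\theta_a:=\theta+a$, whose minimal polynomial is $f_a(t)=f(t-a)$. Then $k(\theta_a)=L$, so each $f_a$ defines a closed point with residue field $L$. The roots of $f_a$ are the $k$-conjugates of $\theta$ translated by $a$. Hence $f_a=f_b$ exactly when $\{\sigma(\theta)+a\}=\{\sigma(\theta)+b\}$ as multisets of conjugates. Summing the roots (equivalently, comparing the coefficients of $t^{d-1}$) gives $da=db$.

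In characteristic zero, or more generally when ${\rm char}\,k\nmid d$, this forces $a=b$. Since $k$ is infinite, we get infinitely many distinct points, and the proof is done in that case. This is the case actually needed in the paper.

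The one obstacle is the case ${\rm char}\,k=p$ with $p\mid d$, where translations may not separate the points. To handle it uniformly, I would instead use the elements $\theta_a:=1/(\theta-a)$ for $a\in k$. Each $\theta_a$ lies in $L$ and still generates $L$, since $\theta=a+1/\theta_a$. If $d=1$ the statement is trivial, because $k$ is infinite and the rational points $t=a$ are distinct. Assume $d\ge2$.

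Suppose $\theta_a$ and $\theta_b$ with $a\ne b$ have the same minimal polynomial. Then $\theta_b=\sigma(\theta_a)$ for some $k$-embedding $\sigma$ of $L$ into an algebraic closure, that is, $\sigma(\theta)=\theta+(a-b)$. This is an equation of the form $\sigma(\theta)-\theta=c$ with $c\in k^\times$. For a fixed $\sigma\ne{\rm id}$ it determines $c$ uniquely, and $\sigma={\rm id}$ gives $c=0$, which is excluded. So for a fixed $a$, at most $d-1$ values of $b$ can collide with $a$. Since $k$ is infinite, we can therefore choose infinitely many $a$ whose minimal polynomials are pairwise distinct. These give infinitely many closed points of ${\bf A}^1_k$, each with residue field isomorphic to $L$.
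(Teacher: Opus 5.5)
Your proof is correct, but it takes a different route from the paper.

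The paper identifies ${\bf R}_{L|k}{\bf A}^1_L\cong{\bf A}^{[L:k]}_k$ with the space of elements of $L$. It removes the finitely many proper closed subvarieties ${\bf R}_{k'|k}{\bf A}^1_{k'}$ coming from proper intermediate fields, which are finite in number by separability. It then uses the fact that a nonempty open subset of affine space over an infinite field has infinitely many rational points. These points are primitive elements of $L$. The paper leaves implicit that each closed point accounts for only finitely many of them, namely at most $[L:k]$, the roots in $L$ of its minimal polynomial.

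You instead fix one primitive element $\theta$ and an explicit one-parameter family of primitive elements. You control exactly this fibre problem by counting $k$-embeddings $\sigma$ of $L$: a collision forces $\sigma(\theta)-\theta\in k^\times$, so each class of colliding parameters has at most $d$ members. The paper's argument is shorter and more geometric. Yours is elementary, and it makes explicit the step the paper skips.

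Your switch from $\theta+a$ to $1/(\theta-a)$ is unnecessary. If $\theta+a$ and $\theta+b$ have the same minimal polynomial, there is a $k$-embedding $\sigma$ with $\sigma(\theta)=\theta+(b-a)$. This is the same equation you derive for the second family, so the embedding count already works for translations in every characteristic. Translations can genuinely collide: for the Artin--Schreier polynomial $t^p-t-c$, all $p$ translates by elements of $\F_p$ coincide. But they collide only boundedly often, which is all you need. Only your coefficient-comparison argument needed ${\rm char}\,k\nmid d$, not the family itself.
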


\dem As the case $L=k$ is obvious, we can assume $[L:k] \geq 2$. Since $L|k$ is separable, there are only finitely many intermediate fields $k\subsetneq k'\subsetneq L$ (e.g. by Galois theory applied to the Galois closure of $L$). The Weil restriction ${\bf R}_{L|k} {\bf A}_L^1$ is isomorphic to ${[L:k]}$-dimensional affine space, and the ${\bf R}_{k'|k} {\bf A}_{k'} ^1$ with $k'$  as above give rise to a finite collection of proper closed subvarieties of smaller dimension. As $k$ is infinite, their open complement
has infinitely many $k$-points, which then correspond to closed points of ${\bf A}^1_k$ with residue field $L$.
\enddem

\begin{proof}[Proof of Theorem \ref{counter2} for $K=\Q(t)$]
Assume now that $M=\Z/8\Z$. As in the previous section, Wang's counterexample and Corollary \ref{corkeylem} imply that $H^1(\Q_2,F)\neq 0$, where $F$ is the flasque torus in a flasque resolution of $\Z/8\Z$ over $\Q$.
But by (\cite{adtcrelle}, Lemma 2.7) we have an isomorphism $H^1(\Q_2,F)\cong H^1(\Q_2^h,F)$,
where $\Q_2^h$ denotes the henselization of $\Q$ at $2$ which is an
algebraic extension of $\Q$.
Thus there is a finite extension $L|\Q$ contained in $\Q_2^h$
with $H^1(L,F)\neq 0$. By Lemma~\ref{closedpoint} above there are infinitely many closed points of ${\bf P}^1_\Q$ with residue field $L$. Let $T$ be
a finite set of discrete valuations coming from these points. Using Lemma \ref{keylem} and (\cite{ctsja}, Corollary 2.6) as in the proof of Theorem \ref{gencounter}, we obtain a chain of isomorphisms

$$(\prod_{v \in T} H^1(K_v,M))/\overline{H^1(K,M)}\cong (\prod_{v \in T} H^1(K_v,F))/
\overline{H^1(K, F)}\cong (\prod_{v \in T} H^1(K_v,F))/H^1(\Q, F).$$

\noindent Next, as in formula (\ref{eqgencounter}) in the proof of Theorem \ref{gencounter}, we get  ${H^1(K_v,F)\cong H^1(L, F)\neq 0}$ for all $v\in T$. Since $H^1(\Q, F)$ is finite (again by \cite{ctsja}, Corollary 2.8), the size of the quotient $(\prod_{v \in T} H^1(K_v,F))/H^1(\Q, F)$ can be arbitrarily large as we add more and more $v$ with residue field $L$ to $T$.
\end{proof}

\begin{rema}\rm
A direct generalization of the above argument shows the following statement. Suppose $k$ is a field of finite type over its prime field and $M$ is a finite \'etale commutative $k$-group scheme for which the Grunwald--Wang property fails for some finite set of discrete valuations of $k$. Then the same holds for $M$ over $k(t)$ and a suitably chosen finite set of places $T$. Moreover,  the cokernel can be arbitrarily large when $T$ varies.
\end{rema}


\begin{thebibliography}{99}

\bibitem{artintate} E. Artin, J. Tate, {\em Class Field Theory}, W. A. Benjamin, Reading, 1967.

\bibitem{cassfro} J.W.S. Cassels, A. Fr\"ohlich, {\em Algebraic number theory}, Academic Press, London and New York, 1967.

\bibitem{ctgp} J.-L. Colliot-Th\'el\`ene, P. Gille, R. Parimala, {
Arithmetic of linear algebraic groups over two-dimensional geometric fields},
{\em Duke Math. J.} {\bf 121}, (2004), 2, 285--341.

\bibitem{6auth} J.-L. Colliot-Th\'el\`ene,
D. Harbater, J. Hartmann, D. Krashen, R. Parimala, V. Suresh,
{ Local-global principles for tori over arithmetic curves},
{\em Algebraic Geom.} {\bf 7} (2020), no. 5,  607--633.

\bibitem{requ} J.-L. Colliot-Th\'el\`ene, J.-J. Sansuc, {La R-\'equivalence
sur les tores}, {\em Ann. Sci. Ec. Norm. Sup.} (4) {\bf 10} (1977),
175--229.

\bibitem{ctsja} \bysame, {
Principal homogeneous spaces under flasque tori: applications},
{\em J. of Algebra} {\bf 106} (1987), 148--205.

\bibitem{endomiy} S. Endo, T. Miyata, {On a classification of the
 function fields of algebraic tori},
{\em Nagoya Math. J.} {\bf 56}, 1974, 85--104.

\bibitem{pcwa} D. Harari, C. Scheiderer, T. Szamuely, {\
Weak approximation for tori over p-adic function fields},
{\em Intern. Math. Res. Notices} {\bf 10} (2015), 2751--2783.

\bibitem{adtcrelle} D. Harari, T. Szamuely,
{Arithmetic duality theorems for 1-motives}, {\em
J. reine angew. Math.}  {\bf 578} (2005), 93--128.

\bibitem{izq} D. Izquierdo,
Principe local-global pour les corps de fonctions sur des corps locaux sup\'erieurs II , {\em  Bull. Soc. Math. France}, 145 (2017), p. 267-293, 2017.

\bibitem{lagarde} L. Lagarde, Degree three unramified cohomology of homogeneous spaces with finite stabilisers, in preparation.

\bibitem{lr} F. Lorenz, P. Roquette,
The theorem of Grunwald-Wang in the setting of valuation theory, in F-W. Kuhlmann et al. (eds.) {\em Valuation theory and its applications}, vol. 2,  Fields Inst. Commun. 33 (2003), 175--212.

\bibitem{milne} J.S. Milne: {\it \'Etale cohomology},
Princeton Mathematical Series {\bf 33}, Princeton University Press,
Princeton, NJ, 1980.

\bibitem{manlinh} M. L. Nguyen, Arithmetics of homogeneous spaces over p-adic function fields, {\em
Journal of the London Mathematical Society} {\bf 109} (1), e12842, 2024.


\bibitem{nis} Y. A. Nisnevich, Espaces homog\`enes principaux rationnellement triviaux et
arithm\'etique des sch\'emas en groupes r\'eductifs sur les anneaux de Dedekind, {\em C. R. Acad.
Sc. Paris} 299(1984) 5--8.

\bibitem{nsw} J. Neukirch, A. Schmidt, K. Wingberg :
{\it Cohomology of number fields}, 2nd ed., Grundlehren der Math. Wiss.
{\bf 323}, Springer-Verlag, 2008.

\bibitem{saltman} D. J. Saltman, Generic Galois extensions and problems in field theory,
{\em Adv. Math.} 43 (1982), 250--283.

\bibitem{corpslocaux} J--P. Serre, {\em Corps locaux}, 3rd ed., Hermann, Paris, 1968.

\bibitem{cogal} \bysame, {\em Cohomologie galoisienne}
(cinqui\`eme
\'edition, r\'evis\'ee et compl\'et\'ee), Springer-Verlag, 1994.

\bibitem{wang}S. Wang,
A counterexample to Grunwald's theorem,
{\em Ann. of Math.} 49 (1948), 1008--1009.

\end{thebibliography}
\end{document}